\theoremstyle{theorem}
\newtheorem{thm}{Theorem}
\newtheorem{lemma}[thm]{Lemma}
\newtheorem{corr}[thm]{Corollary}
\newtheorem*{conj}{Conjecture}
\newcommand{\N}{\mathbb{N}}
\author{Mary Grace Hanson and David A. Nash\thanks{Le Moyne College}}
\title{Minimal and maximal Numbrix puzzles}
\date{\today}
\begin{document}

\maketitle
\begin{abstract}
This paper explores special arrangements of clues in $m \times n$ Numbrix puzzles.  The maximum number of clues which fails to define an $m \times n$ puzzle is demonstrated for all $m$ and $n$.  In addition, a small upper bound on the minimum number of clues required to define an $m \times n$ puzzle is given for all $m$ and $n$ as well.  For small $m \geq 3$ our upper bound appears to actually give the minimum number and hence we conjecture that our bound may be sharp for all $m \geq 3$.
\end{abstract}

Numbrix puzzles were created by Marilyn vos Savant, author of a question and answer column in Parade Magazine called ``Ask Marilyn" and holder of the highest IQ ever recorded.  As part of her work for Parade, she also creates a daily Numbrix Puzzle that is posted to the Parade website. Savant typically creates $9\times 9$ puzzles with a varying number of clues, but for the purposes of this paper, the definition of Numbrix Puzzles has been expanded to any $m\times n$ board with $m \leq n \in \N$.  Our choice to force $m \leq n$ simply arises from the ability to reflect or rotate any $m \times n$ board into an $n \times m$ one.

A completed $m \times n$ Numbrix Puzzle is an $m \times n$ board containing the numbers $1$ through $mn$ so that consecutive numbers are side-to-side adjacent (either horizontally or vertically, not diagonally).  An incomplete Numbrix Puzzle has some number of squares on the board already filled in with chosen numerical ``clues" so that there is only one way to fill in the remaining numbers. It is the job of the solver to use the given clues together with the rules for Numbrix puzzles to deduce the rest of the entries.  Figure~\ref{fig:example} gives an example of a puzzle on a $3\times 3$ board for the reader to solve. 

\begin{figure}[h!]
	\centering
	\begin{tikzpicture}
	\draw[gray] (0,0) -- (1.5,0);
	\draw[gray] (0,.5) -- (1.5,.5);
	\draw[gray] (0,1) -- (1.5,1);
	\draw[gray] (1.5,1.5) -- (0,1.5);
	\draw[gray] (0,0) -- (0,1.5);
	\draw[gray] (.5,0) -- (.5,1.5);
	\draw[gray] (1,0) -- (1,1.5);
	\draw[gray] (1.5,0) -- (1.5,1.5);
	\draw (.75,.25) 	node {$6$};
	\draw (1.25,.75) 	node {$2$};
	\end{tikzpicture}
	\caption{A $3\times 3$ Numbrix puzzle.}
	\label{fig:example}
\end{figure}

As we will see in Section~\ref{sec:Graph}, Numbrix puzzles have some nice connections to graph theory.  More specifically, the solutions to Numbrix puzzles correspond to directed Hamiltonian paths or circuits in rectangular grid graphs.  Counting Hamiltonian paths and circuits in general graphs is a difficult problem and the family of rectangular grid graphs is no exception.  Thompson \cite{Thompson} first raised this question and gave several preliminary results regarding Hamiltonian paths that start in one corner of the grid and end in another.  Since then, work has been done (see e.g.\ \cite{Enting}, \cite{CEG}, \cite{SS}, and \cite{CK}) to develop generating functions for counting various subsets of the Hamiltonian paths and circuits in various grid graphs.

On the more computational side, Itai, Papadimitriou, and Szwarcfiter \cite{IPS} have demonstrated that determining the existence of a Hamiltonian path with a chosen start and finish or the existence of a Hamiltonian circuit with a chosen start in general grid graphs are both NP-complete problems.  As for counting the number of such paths and circuits, in the square case ($m=n$), Mayer, Guez, and Dayantis \cite{MGD} developed a computer algorithm for counting all Hamiltonian paths, but were limited to counting up to $7 \times 7$ grids by computational time constraints.  More recently, Jacobsen \cite{Jacobsen} created an algorithm for counting the number of Hamiltonian chains, paths, and circuits in grid graphs in both 2 and 3 dimensions.  In the 2-dimensional setting and for square $n \times n$ grids specifically, his algorithm allowed him to count the number of chains up to $n=12$, the number of paths up to $n=17$, and the number of circuits up to $n=20$.  Many of these types of counts can be found in the On-line Encyclopedia of Integer Sequences (see e.g.\ A096969 or A143246). 

While we plan to make use of some of the related work in graph theory, this paper instead focuses on two distinctly different questions concerning Numbrix Puzzles: 1) On an $m\times n$ board what is the maximum number of clues that can be given without defining a Numbrix puzzle and 2) On an $m\times n$ board what is the minimum number of clues required to define a Numbrix puzzle?  Note that we will only call a collection of numbers on the board a set of ``clues" if there exists at least one possible solution containing those numbers in the given positions.  That way, we may directly avoid ``false clues'' with no solutions.

\section{Graph theory and observations}\label{sec:Graph}
As mentioned above, we can gain insight into our problem by drawing connections between Numbrix puzzles and graph theory.  We start by creating the underlying rectangular grid graph associated with an $m \times n$ board where the vertices correspond to the squares in our board and two vertices are connected by an edge whenever the associated squares are adjacent horizontally or vertically.  It is well-known that rectangular grid graphs are \emph{bipartite} graphs, meaning that they contain two disjoint subsets of the vertices such that there are no edges between any vertices in the same part.  This is equivalent to saying that the graph is \emph{two-colorable}, meaning that we need only two colors to color all vertices in the graph without any adjacent vertices having the same color.  For example, we may color our $m \times n$ grid graph in a checkerboard pattern starting with white in the upper left corner (see Figure~\ref{fig:checkerboard}).

\begin{figure}[h!]
	\centering
	\begin{tikzpicture}[scale = .5]
	\draw[gray] (0,3) rectangle (1,2);
	\draw[gray] (2,3) rectangle (3,2);
	\draw[gray] (4,3) rectangle (5,2);
	\draw[gray] (1,2) rectangle (2,1);
	\draw[gray] (3,2) rectangle (4,1);
	\draw[gray] (0,1) rectangle (1,0);
	\draw[gray] (2,1) rectangle (3,0);
	\draw[gray] (4,1) rectangle (5,0);
	\draw[gray,fill=black] (1,3) rectangle (2,2);
	\draw[gray,fill=black] (3,3) rectangle (4,2);
	\draw[gray,fill=black] (0,2) rectangle (1,1);
	\draw[gray,fill=black] (2,2) rectangle (3,1);
	\draw[gray,fill=black] (4,2) rectangle (5,1);
	\draw[gray,fill=black] (1,1) rectangle (2,0);
	\draw[gray,fill=black] (3,1) rectangle (4,0);
	\end{tikzpicture}
	\caption{A checkerboard coloring of a $3 \times 5$ board.}
	\label{fig:checkerboard}
\end{figure}
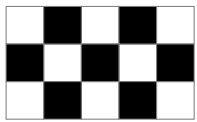

\newpage
Since the only edges in our graph must connect squares of different colors, this implies that, for any solution to a Numbrix puzzle, the odd and even numbers must alternate colors.  Notice that when $m$ and $n$ are both odd, as in the above example, then the odd part of the graph must have one more vertex than the even part.  Thus, when $m$ and $n$ are both odd, the odd numbers in a solution must appear in the ``white" part of our coloring, while the even numbers must appear in the ``black" part.

Beyond this coloring idea, the solution to any Numbrix puzzle can be used to define a \emph{directed Hamiltonian path} -- that is, a directed path which visits every vertex in the graph exactly once with no repeated edges -- by moving through the consecutive squares starting with one.  For our purposes, we'll drop the direction, so that each \emph{path} that we consider corresponds to at least two solutions -- one for each potential direction.  Thus, given any solution to a puzzle, there exists another solution in which each entry $1+j$ is replaced by $mn-j$ for each $0\leq j \leq mn-1$.  This idea turns out to be helpful, so we refer to it as the \emph{reversal property} of Numbrix puzzles.

\begin{lemma}[Reversal Property]
Given a solution to an $m \times n$ Numbrix puzzle, there exists another puzzle whose solution simply replaces each entry $1+j$ with $mn-j$ for each $0 \leq j \leq mn-1$.
\end{lemma}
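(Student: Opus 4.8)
The plan is to observe that the prescribed substitution is precisely the value-reversing involution $\sigma\colon v\mapsto mn+1-v$ on the label set $\{1,\dots,mn\}$, and then to check two things: that $\sigma$ carries completed puzzles to completed puzzles, and that it preserves uniqueness of the completion. First I would do the bookkeeping: writing an entry as $1+j$ with $0\le j\le mn-1$, the rule ``replace $1+j$ by $mn-j$'' is exactly the map $\sigma(v)=mn+1-v$, which is a bijection of $\{1,\dots,mn\}$ onto itself satisfying $\sigma\circ\sigma=\mathrm{id}$.

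Next I would verify that applying $\sigma$ entrywise to a completed $m\times n$ Numbrix board again gives a completed board. Since $\sigma$ permutes the labels, the new board still contains each of $1,\dots,mn$ exactly once; and because $\sigma(v+1)=\sigma(v)-1$, any two cells holding consecutive labels before the substitution hold consecutive labels after it. As no cell has moved, the side-to-side adjacency of consecutive entries is inherited, so $\sigma$ sends solutions to solutions. (This also shows that the reversed clues genuinely admit a solution, hence are bona fide clues in the sense of the paper.)

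Finally I would address the word ``puzzle,'' i.e.\ the uniqueness of the completion. Given a puzzle $P$ with unique solution $S$, let $P'$ be the board whose clues are the $\sigma$-images of the clues of $P$, and set $S'=\sigma(S)$. By the previous step $S'$ is a completed board, and it contains the clues of $P'$, so it solves $P'$. Conversely, if $T$ solves $P'$, then $\sigma(T)$ is a completed board containing the $\sigma$-images of the clues of $P'$, which are exactly the clues of $P$ by involutivity; uniqueness for $P$ forces $\sigma(T)=S$, hence $T=\sigma(\sigma(T))=\sigma(S)=S'$. Thus $P'$ is a puzzle with the stated unique solution $S'$. There is no serious obstacle here; the only point requiring care is to keep two layers separate — the action of $\sigma$ on a single board versus its effect on the clue-set and on uniqueness — and to invoke $\sigma\circ\sigma=\mathrm{id}$ at the one spot where uniqueness for $P$ has to be fed back to obtain uniqueness for $P'$.
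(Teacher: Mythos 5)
Your proof is correct and amounts to the same idea the paper uses: the paper justifies this lemma (without a formal proof) by noting that a solution is a directed Hamiltonian path and that reversing its direction relabels each entry $1+j$ as $mn-j$, which is precisely your involution $\sigma(v)=mn+1-v$ acting on labels while fixing cells. Your extra bookkeeping --- checking that $\sigma$ preserves consecutiveness and that uniqueness of the completion transfers to the reversed clue set via $\sigma\circ\sigma=\mathrm{id}$ --- is a welcome precision that the paper leaves implicit.
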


In the special case when $m \geq 2$ and the solution to a puzzle also has 1 and $mn$ adjacent, then the underlying path has start and end next to each other.  Hence, by following the edge between them, such a solution defines a Hamiltonian circuit in the grid which we will refer to as a \emph{circular path}.  Given a circular path in a grid, we choose to forget the starting point in order to use the same circular path to define $2mn$ different solutions -- by choosing a start (or any particular value) and a direction.  The following Lemma regarding the existence of circular paths was known to Thompson \cite{Thompson}, however he left the proof to the reader, so we offer a short one here.

\begin{lemma}[Thompson]\label{lem:circularpaths}
With $m \geq 2$, an $m \times n$ board contains a circular path if and only if $m$ is even or $n$ is even
\end{lemma}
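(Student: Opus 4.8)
The plan is to prove the two implications separately: the ``only if'' direction follows immediately from bipartiteness, and the ``if'' direction I would handle by an explicit construction.

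For the ``only if'' direction I would argue by contraposition. Suppose both $m$ and $n$ are odd. As observed just above, in the checkerboard coloring of the $m\times n$ grid graph the ``white'' part then has exactly one more vertex than the ``black'' part. But a circular path is, by definition, a Hamiltonian circuit, and traversing any circuit in a bipartite graph alternates between the two color classes; since the circuit is closed and visits all $mn$ vertices, it must visit the two classes equally often, forcing them to have the same size --- a contradiction. Hence a circular path can exist only when at least one of $m,n$ is even. (Note this argument says nothing about $m=1$; that case is excluded by hypothesis precisely because the $1\times n$ grid graph is a path and has no circuit at all.)

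For the ``if'' direction, assume at least one dimension is even; by reflecting the board across its main diagonal if necessary --- which, as noted in the introduction, turns an $m\times n$ board into an $n\times m$ one and so preserves the existence of circular paths --- I may assume $n$ is even. I would then exhibit an explicit circular path via a ``corridor plus boustrophedon'' pattern: reserve the top row as a horizontal corridor, and thread a vertical snake through the remaining $(m-1)\times n$ block one column at a time, moving from column $n$ leftward to column $1$ --- sweeping top-to-bottom in column $n$, bottom-to-top in column $n-1$, and alternating thereafter. Because $n$ is even, column $1$ is reached after an odd number of prior columns, so it is swept bottom-to-top; thus the snake enters the block at $(2,n)$ (just below the corridor) and leaves it at $(2,1)$. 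Closing the loop with the edge from $(1,n)$ to $(2,n)$, the corridor $(1,n),(1,n-1),\dots,(1,1)$ along the top row, and the edge from $(1,1)$ to $(2,1)$ then yields a Hamiltonian circuit. The degenerate cases $m=2$ (the block is a single row and the snake is just a left-to-right pass) and $n=2$ (the circuit is the boundary of the grid) should be checked separately, but both go through since $m\geq 2$ makes the block nonempty.

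I do not expect a real obstacle: the statement is classical, and the forward direction is essentially the paper's own parity observation. The only place demanding care is the bookkeeping in the construction --- verifying that the parity of $n$ is exactly what makes the snake terminate at $(2,1)$ rather than at $(m,1)$, and confirming that every ``turn'' square of the snake lies in rows $2,\dots,m$ so that the moves between consecutive columns are legal. Pinning down those parities and boundary conditions, together with the two small cases, is the whole of the work.
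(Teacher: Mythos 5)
Your proposal is correct and takes essentially the same approach as the paper: a parity/bipartiteness argument for the ``only if'' direction (the paper phrases it as $1$ and $mn$ being adjacent forcing $mn$ even, which is the same checkerboard observation) and an explicit snake construction for the ``if'' direction. The only cosmetic difference is that the paper reserves three sides of the boundary and zigzags horizontally through the interior $(m-2)\times(n-1)$ block, whereas you reserve one row as a corridor and snake vertically through the remaining $(m-1)\times n$ block; both parities check out.
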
	
\begin{proof}
$(\implies)$ If a circular path exists, then it places $1$ and $mn$ in adjacent squares.  Thus, $mn$ is an even number -- since adjacent squares must have opposite parity -- which implies that $m$ is even or $n$ is even.\\
$(\impliedby)$ Suppose without loss of generality that $m$ is even.  We then give an algorithm for creating a circular path on the board: Starting from the top left corner, take the path clockwise around the outer edge of the board, to the top right corner, then the bottom right corner, and finally the bottom left corner.  If $m=2$ we are done, otherwise this 
path leaves an $(m-2)\times (n-1)$ board that still must be covered.  If we continue the path by zig-zagging back and forth along each row, the fact that $(m-2)$ is even, implies that this zigzag must end at the top left corner, adjacent to our original starting position.  Hence, as all squares have been traversed, it follows that this defines a circular path.  See Figure~\ref{fig:circularalgorithm} for an example of this process.
\end{proof}
	
	\begin{figure}[h!]
		\centering
		\begin{tikzpicture}[scale = .8]
		\draw[gray] (0,0) -- (3.5,0);
		\draw[gray] (0,.5) -- (3.5,.5);
		\draw[gray] (0,1) -- (3.5,1);
		\draw[gray] (0,1.5) -- (3.5,1.5);
		\draw[gray] (0,2) -- (3.5,2);
		\draw[gray] (0,2.5) -- (3.5,2.5);
		\draw[gray] (0,3) -- (3.5,3);
		\draw[gray] (0,0) -- (0,3);
		\draw[gray] (.5,0) -- (.5,3);
		\draw[gray] (1,0) -- (1,3);
		\draw[gray] (1.5,0) -- (1.5,3);
		\draw[gray] (2,0) -- (2,3);
		\draw[gray] (2.5,0) -- (2.5,3);
		\draw[gray] (3,0) -- (3,3);
		\draw[gray] (3.5,0) -- (3.5,3);
		\draw[very thick] (.25,2.75) -- (3.25,2.75) -- (3.25,0.25) -- (0.25, 0.25);
		\draw[very thick] (0.25,0.25) -- (0.25,0.75) -- (2.75,0.75) -- (2.75,1.25) -- (0.25,1.25) -- (0.25,1.75) -- (2.75,1.75) -- (2.75,2.25) -- (0.25,2.25) -- (0.25,2.75);
		\end{tikzpicture}
		\caption{A circular path created when $m=6$ and $n=7$.}
		\label{fig:circularalgorithm}
	\end{figure}

The existence of a circular path on a board tells us immediately that one clue will not define a Numbrix puzzle.
 
\begin{corr}\label{cor:oneclue}
With $m \geq 2$, one clue does not define a puzzle on an $m\times n$ board, when $m$ is even or $n$ is even.
\end{corr}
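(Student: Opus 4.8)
The plan is to leverage Lemma~\ref{lem:circularpaths}: under the hypothesis that $m$ is even or $n$ is even, the $m\times n$ board carries a circular path $C$, and --- as noted in the paragraph preceding that lemma --- a single circular path already encodes $2mn$ distinct solutions, obtained by choosing which square receives a prescribed value and choosing a direction of travel around the circuit. I would use this abundance of solutions to show that no single clue can pin down a unique one.

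First I would fix an arbitrary clue, that is, a value $v\in\{1,\dots,mn\}$ placed in some square $s$. Writing $C$ as a cyclic sequence of squares $s=s_0,s_1,\dots,s_{mn-1}$ (consecutive ones adjacent, indices read modulo $mn$), I would build a solution by placing the value $v+i$, reduced modulo $mn$ into $\{1,\dots,mn\}$, in square $s_i$. Since consecutive labels land on adjacent squares, this is a bona fide completed puzzle, and it contains the clue $v$ at $s=s_0$. Doing the same thing with $C$ traversed in the opposite direction gives a second completed puzzle that also contains the clue. As a side benefit, this construction also shows that any single value-in-a-square really is a legitimate clue in the sense of the paper, so there is nothing to check about ``false clues.''

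The one point that needs a genuine (if tiny) argument is that these two puzzles are actually different. I would compare what they put in the square adjacent to $s$ along $C$: one assigns it $v+1$ and the other assigns it $v-1$ (modulo $mn$), and these agree only if $mn\mid 2$, which is impossible since $m\ge 2$ and $n\ge m$ force $mn\ge 4$. Hence two distinct solutions contain the clue, so the clue fails to define a puzzle. I expect this distinctness check to be the only real obstacle, and it is mild. Note in particular that the reversal property alone would not suffice here, since reversing a solution sends the clue $v$ at $s$ to the clue $mn+1-v$ at $s$, which is the same clue only in the exceptional case $2v=mn+1$; it is the rotational freedom of a circular path, not merely its reversibility, that makes the argument go through uniformly.
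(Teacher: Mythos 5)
Your proposal is correct and follows essentially the same route as the paper: invoke Lemma~\ref{lem:circularpaths} to obtain a circular path and then produce two solutions containing the given clue by traversing that path in its two directions. Your version merely makes explicit the rotation that places $v$ at the clued square and the check that the two resulting solutions differ, details the paper leaves implicit.
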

\begin{proof}
From Lemma~\ref{lem:circularpaths} we know that there exists at least one circular path on any such board.  Let $x$ be a clue on the board, then regardless where $x$ is placed, we can use the path to complete the puzzle in two different ways by simply giving the path a direction. Therefore, one clue is not enough to define a unique solution.
\end{proof}

As we move towards our two main questions, it will be helpful to keep a particular type of puzzle solution in mind.  We refer to any solution to an $m \times n$ puzzle whose entries  proceed back and forth across the rows starting from any corner as a ({horizontal) \emph{zig-zag solution}, see Figure~\ref{fig:zig-zag}.
	\begin{figure}[h!]
		\centering
		\begin{tikzpicture}[scale = .5]
		\draw[gray] (0,0) rectangle (4,1);
		\draw[gray] (0,2) rectangle (4,3);
		\draw[gray] (0,0) rectangle (1,3);
		\draw[gray] (2,0) rectangle (3,3);
		\draw[gray] (4,1) -- (4,2);
		\draw (0.5,2.5) node {4};
		\draw (1.5,2.5) node {3};
		\draw (2.5,2.5) node {2};
		\draw (3.5,2.5) node {1};
		\draw (3.5,1.5) node {8};
		\draw (2.5,1.5) node {7};
		\draw (1.5,1.5) node {6};
		\draw (0.5,1.5) node {5};
		\draw (0.5,.5) node {12};
		\draw (1.5,.5) node {11};
		\draw (2.5,.5) node {10};
		\draw (3.5,.5) node {9};
		\end{tikzpicture}
		\caption{The $3 \times 4$ zig-zag solution starting in the top right corner.}
		\label{fig:zig-zag}
	\end{figure}

These solutions are interesting because they have a very distinctive structure that can be exploited readily when giving clues.   The important observation to make is that each entry in the first column has exactly one more adjacent number that has yet to be filled in and exactly one adjacent square (in column two) in which to place it.  Hence, filling in those adjacent numbers will completely fill column two.  For example, if we were given 4, 5, and 12 as in Figure~\ref{fig:zig-zag}, then we would immediately be able to fill in 3, 6, and 11 in the second column.  These new entries are now in the same predicament, each having one adjacent number and only one adjacent square to place it in.  Continuing this process column by column, the only way to complete the board is with exactly the zig-zag solution.  The same idea applies to any particular zig-zag solution.  Since we plan to make heavy use of this idea in what follows we pull it out for later reference.

\begin{lemma}[Zig-Zag Clues]\label{lem:zig-zag}
The entries in the first column of any $m \times n$ zig-zag solution define a Numbrix puzzle when given as clues.
\end{lemma}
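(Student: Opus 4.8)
The plan is to fix one horizontal zig-zag solution, call it $Z$, and first observe that, since consecutive entries of $Z$ are side-to-side adjacent by construction, $Z$ is itself a bona fide Numbrix solution; hence the entries of its first column genuinely form a legal set of clues (a solution containing them exists) and the lemma reduces to \emph{uniqueness}: any solution $S$ whose first column agrees with $Z$ must equal $Z$. The reasoning is the same for each of the horizontal zig-zag solutions, so I would fix, say, the one in which row $r$ is traversed rightward when $r$ is even and leftward when $r$ is odd, so that the turns of the underlying path alternate between column $1$ and column $n$. I would prove uniqueness by induction on $k$ of the statement ``columns $1,\dots,k$ of $S$ coincide with those of $Z$,'' the base case $k=1$ being precisely the clue hypothesis.

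For the inductive step, assume columns $1,\dots,k$ of $S$ already match $Z$, where $k\le n-1$, and aim to force column $k+1$. The mechanism is a structural feature of $Z$ that I would establish first: for each value $u$ occupying a square $(r,k+1)$ of $Z$, at least one of its consecutive partners $u-1,u+1$ lies in column $k$ of $Z$, and any such partner $t$ occupies the square $(r,k)$ directly to the left of $u$. Granting this, in $S$ the value $t$ sits at $(r,k)$ by the induction hypothesis, so $u$ --- being consecutive to $t$ --- must occupy a square adjacent to $(r,k)$, that is, one of $(r-1,k)$, $(r+1,k)$, $(r,k-1)$, $(r,k+1)$, with one fewer option when $k=1$. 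Every one of these except $(r,k+1)$ lies in columns $1,\dots,k$, whose entries in $S$ are by hypothesis exactly those of $Z$ and hence do not include $u$ (which $Z$ places in column $k+1$); therefore $u$ is forced into $(r,k+1)$ in $S$, agreeing with $Z$. Letting $r$ range over all $m$ rows shows that column $k+1$ of $S$ equals column $k+1$ of $Z$, which completes the induction; taking $k=n$ gives $S=Z$ and proves the lemma.

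The step needing real care, and the one I expect to be the main obstacle, is the structural sub-claim about $Z$, because of its boundary cases. From the explicit description of $Z$ above one checks that in an \emph{interior} column every value has exactly one path-neighbor in the column immediately to its left and exactly one in the column immediately to its right, so the sub-claim is automatic there; the only two values with a single neighbor, namely $1$ and $mn$, always lie in a turn column (they begin and end a traversal of a boundary row) and so never interfere with any column $k+1$ for $k\ge 1$; and in a turn column, even though two consecutive values may share it, each of its entries still has exactly one path-neighbor one column to its left. Laying out this casework cleanly --- for instance by tabulating, for a square in row $r$ and column $c$ of $Z$, where its path-predecessor and path-successor sit in terms of the parity of $r$ and whether $c$ is a turn column --- is the bulk of the work; after that, the forcing argument above is just bookkeeping. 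As a built-in sanity check it is worth noting that the correspondence $u\mapsto t$ used above is simply the map sending the entry of $Z$ in square $(r,k+1)$ to the entry in square $(r,k)$, hence a bijection between the entries of the two columns, which confirms that all $m$ entries of column $k+1$ --- not just some of them --- get pinned down.
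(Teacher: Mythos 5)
Your proof is correct and follows essentially the same approach as the paper, which justifies the lemma only by the informal paragraph preceding it: each entry of a filled column has exactly one consecutive partner yet to be placed and (once the occupied squares are accounted for) exactly one adjacent square available for it, so the columns are forced one at a time from left to right. Your version merely formalizes this as an induction on columns and makes explicit the boundary/turn-column casework and the bijection between consecutive columns that the paper leaves implicit.
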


\section{$1\times n$ and $2\times n$ boards}
The $1 \times n$ boards are too small for us to see the structure that appears in more general situations, so we choose to deal with them separately first.  Certainly, a $1 \times 1$ Numbrix puzzle isn't really a puzzle as there is only one way to complete it and there are, therefore, no clues required.  In the slightly more interesting case when $n \geq 2$, a $1 \times n$ puzzle can only be completed in two possible ways since there is only one undirected path in the underlying grid.  The solution depends only on which end has 1 and which has $n$ (or which direction we give to the path).  With this in mind, one can see that almost any single clue will completely determine the puzzle.  The one case where this is not true is when $n$ is odd and the clue you are given is $\frac{n+1}{2}$ exactly in the middle of the board, see Figure~\ref{fig:1xnMax}.  With these observations in mind, we can answer all of our questions for $1 \times n$ puzzles.

\begin{figure} [h!]
		\centering
		\begin{tikzpicture}[scale=0.6]
		\draw[gray] (0,0) -- (7,0); 
		\draw[gray] (0,1) -- (7,1); 
		\draw[gray] (0,0) -- (0,1); 
		\draw[gray] (1,0) -- (1,1);
		\draw[gray,dotted] (1.1,0.5) -- (2.9,0.5);
		\draw[gray] (3,0) -- (3,1) -- (4,1) -- (4,0) -- (3,0);
		\draw[gray,dotted] (4.1,0.5) -- (5.9,0.5);
		\draw[gray] (6,0) -- (6,1);
		\draw[gray] (7,0) -- (7,1);
		\draw (3.5,.5) node {$\frac{n+1}{2}$};
		\end{tikzpicture}
		\caption{An unsolvable $1 \times n$ puzzle for $n$ odd.}
		\label{fig:1xnMax}
	\end{figure}
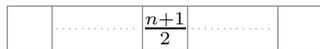

\begin{thm}[$1\times n$ Puzzles]\label{thm:1x1}~\\
(i) The minimum number of clues that will define a $1 \times n$ Numbrix puzzle is 0, if $n=1$, and is 1 otherwise.\\
(ii) With $n>1$, the maximum number of clues that can be given without defining a $1 \times n$ Numbrix puzzle is 1, if $n$ is odd, and is 0 otherwise.
\end{thm}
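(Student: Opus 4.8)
The plan is to exploit the fact that the grid graph underlying a $1 \times n$ board is a single path on $n$ vertices, which for $n \geq 2$ has exactly one Hamiltonian path as an undirected graph and hence exactly two solutions, one for each direction. First I would fix notation: label the cells $1,\dots,n$ from left to right, and observe that the two solutions are the \emph{increasing} solution $s^{+}$, which puts value $v$ in cell $v$ for each $v$, and its reversal $s^{-}$ (via the reversal property), which puts value $v$ in cell $n+1-v$. The single most useful computation is that a value $v$ occupies the same cell in both solutions precisely when $v = n+1-v$, i.e. when $n$ is odd and $v = \tfrac{n+1}{2}$; in particular $s^{+}\neq s^{-}$ whenever $n\geq 2$. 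I would also record the trivial-but-crucial monotonicity: if a valid set of clues admits two distinct solutions, then so does every subset of it, since any solution extending a set of clues also extends each of its subsets. Hence the collection of non-defining valid clue sets is closed under taking subsets, so ``the maximum number of clues that fails to define a puzzle'' is simply the largest size of such a set.

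For part (i), the case $n=1$ is immediate: the lone cell must hold $1$, so the unique completion requires no clues. For $n>1$, zero clues leave both $s^{+}$ and $s^{-}$ available (and these are distinct), so at least one clue is needed; conversely the single clue consisting of the value $1$ in cell $1$ is realized by $s^{+}$ but not by $s^{-}$ (whose cell $1$ holds $n\neq 1$), so it defines the puzzle. Thus the minimum is exactly $1$.

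For part (ii) with $n>1$, I would split on the parity of $n$. If $n$ is even, I claim every valid single clue defines the puzzle: a clue placing value $v$ in cell $p$ comes from some solution, so $p=v$ or $p=n+1-v$, but these two cells are distinct because $n$ even rules out $v=\tfrac{n+1}{2}$; hence exactly one of $s^{+},s^{-}$ contains the clue. So no single clue fails, while $0$ clues do fail, and the maximum is $0$. If $n$ is odd, the single clue placing $\tfrac{n+1}{2}$ in cell $\tfrac{n+1}{2}$ lies in both $s^{+}$ and $s^{-}$, so it fails to define the puzzle, giving a non-defining set of size $1$. To finish I must show every valid set of two clues defines the puzzle: if such a set admitted two solutions they would have to be $s^{+}$ and $s^{-}$, which forces each clued value $v$ to satisfy both $p=v$ and $p=n+1-v$, hence $v=\tfrac{n+1}{2}$; but then both clues would carry the same value, which is impossible since the entries of a Numbrix solution are distinct. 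So the maximum is exactly $1$.

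The argument is elementary throughout; the only place needing care is the bookkeeping in part (ii) — namely pinning down exactly which value/cell pairs count as \emph{valid} clues, and using the distinctness of Numbrix entries to rule out two clues both equal to the central value when $n$ is odd. I do not anticipate any serious obstacle.
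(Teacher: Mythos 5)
Your proof is correct and follows essentially the same reasoning the paper uses: the paper does not give a formal proof of this theorem, instead relying on the preceding observation that a $1\times n$ board admits exactly two solutions (one per direction of the unique path) and that the only clue fixed by the reversal is $\frac{n+1}{2}$ in the central cell when $n$ is odd. Your write-up simply makes that sketch rigorous, with the downward-closure observation and the two-clue case for odd $n$ being useful added detail.
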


Next we can turn our attention to the $2\times n$ case ($n \geq 2$) which is a little bit more interesting as we have more room to work with.  Now, there are numerous possible paths through the underlying grid.  More importantly, thanks to Lemma~\ref{lem:circularpaths} we know that there exists a circular path in this board and that, therefore, no single clue can ever determine a puzzle.  Knowing that one clue will never suffice and in view of Lemma~\ref{lem:zig-zag} we can quickly see that two clues will be minimal as we may use the first column of any $2 \times n$ zig-zag solution as clues.

As for the maximum number of clues that fail to define a puzzle, we gain insight by considering the $2 \times 2$ case.  Observe that if we give clues (in bold) in opposite corners, then there will always be two ways to complete the board, see Figure~\ref{fig:2x2}.  In a similar way, we can fill in all but two boxes in any $2 \times n$ board and still be left with no way to determine the final two squares.

	\begin{figure} [h!]
		\centering
		\begin{tikzpicture}
		\draw[gray] (0,0) -- (1,0); 
		\draw[gray] (0,.5) -- (1,.5); 
		\draw[gray] (0,1) -- (1,1); 
		\draw[gray] (0,0) -- (0,1); 
		\draw[gray] (.5,0) -- (.5,1);
		\draw[gray] (1,0) -- (1,1);
		\draw (.25,.25) node {\large \textbf{2}};
		\draw (.75,.75) node {\large \textbf{4}};
		\draw (.25,.75) node {\small 1};
		\draw (.75,.25) node {\small 3};
		\end{tikzpicture}
		\qquad
		\begin{tikzpicture}
		\draw[gray] (0,0) -- (1,0); 
		\draw[gray] (0,.5) -- (1,.5); 
		\draw[gray] (0,1) -- (1,1); 
		\draw[gray] (0,0) -- (0,1); 
		\draw[gray] (.5,0) -- (.5,1);
		\draw[gray] (1,0) -- (1,1);
		\draw (.25,.25) node {\large \textbf{2}};
		\draw (.75,.75) node {\large \textbf{4}};
		\draw (.75,.25) node {\small 1};
		\draw (.25,.75) node {\small 3};
		\end{tikzpicture}
		\caption{Two possible solutions for a $2\times 2$ board with 2 clues.}
		\label{fig:2x2}
	\end{figure}

\begin{thm}[$2\times n$ Puzzles]\label{thm:2xn}~\\
(i) The minimum number of clues that will define a $2 \times n$ Numbrix puzzle is 2.\\
(ii) The maximum number of clues that will not define a $2 \times n$ Numbrix puzzle is $2n-2$.
\end{thm}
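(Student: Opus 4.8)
I would handle the two parts separately, each reducing to facts already established. For part (i), the lower bound is immediate: since $m=2$ is even, Corollary~\ref{cor:oneclue} says one clue never defines a $2\times n$ puzzle, and zero clues plainly does not when $n\geq 2$ (the two zig-zag solutions starting in the top-left and top-right corners are both consistent with no clues). For the matching upper bound I would apply Lemma~\ref{lem:zig-zag} with $m=2$: the first column of any $2\times n$ zig-zag solution consists of exactly two entries, and giving those two entries as clues pins down the entire board. Hence the minimum is $2$.

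For part (ii) the two inequalities run in opposite directions, and the easier one is that $2n-1$ clues always suffice. A genuine set of $2n-1$ clues leaves exactly one of the $2n$ squares empty, so exactly one value $v\in\{1,\dots,2n\}$ is unused; since by our standing convention a collection of clues must extend to at least one solution, every solution containing the clues is forced to place $v$ in that single empty square, so there is only one completion. (The case of $2n$ clues is the degenerate one where the clues already form a solution.) Consequently no genuine clue set of size larger than $2n-2$ can fail to define a puzzle.

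It then remains to produce $2n-2$ clues that do not define a puzzle, generalizing Figure~\ref{fig:2x2}. Index squares by $(i,j)$ with row $i\in\{1,2\}$ and column $j\in\{1,\dots,n\}$. I would fill columns $3$ through $n$ with a zig-zag on the values $5,6,\dots,2n$ — going left to right along row $1$ from $(1,3)=5$ to $(1,n)=n+2$, then $(2,n)=n+3$, then right to left along row $2$ back to $(2,3)=2n$ — and I would additionally give the clues $2$ at $(2,1)$ and $4$ at $(1,2)$, leaving $(1,1)$ and $(2,2)$ empty. This is $2(n-2)+2 = 2n-2$ clues. Exactly as in Figure~\ref{fig:2x2}, the configuration admits (at least) two completions: one with $(1,1)=1,\ (2,2)=3$, realized by the path $(1,1)\to(2,1)\to(2,2)\to(1,2)\to(1,3)\to\cdots$, and one with $(1,1)=3,\ (2,2)=1$, realized by $(2,2)\to(2,1)\to(1,1)\to(1,2)\to(1,3)\to\cdots$. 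Since one of these is a full legal solution the collection really is a clue set, and since there are two completions it fails to define a puzzle; together with the previous paragraph this gives that the maximum is exactly $2n-2$.

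The only computations involved are routine: checking that the zig-zag on columns $3$ through $n$ uses precisely the values $5$ through $2n$ and meshes correctly with the clue $4$ at $(1,2)$, and checking that the two displayed completions obey the side-to-side adjacency rule. There is no genuine obstacle here — the point is mostly bookkeeping. The one place to be careful is the upper-bound argument, where the "at least one solution" convention is what collapses the $2n-1$ case to a triviality; once that is observed, essentially all the content of the theorem lies in the explicit $2n-2$ construction, which is just the $2\times 2$ picture with a zig-zag tail attached.
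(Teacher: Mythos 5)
Your proposal is correct and follows essentially the same route as the paper: the lower bound in (i) via Corollary~\ref{cor:oneclue}, the upper bound via Lemma~\ref{lem:zig-zag}, and for (ii) the observation that $2n-1$ genuine clues leave only one square and one value, plus an explicit $2n-2$-clue configuration built from the $2\times 2$ ambiguous block with a zig-zag tail. The only cosmetic difference is that you attach a horizontal zig-zag tail where the paper uses a vertical one (which is why the paper has to remark on where $2n$ lands); both work equally well.
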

\begin{proof} (i) The following board defines a Numbrix puzzle by Lemma~\ref{lem:zig-zag}.
$$
		\begin{tikzpicture}
		\draw[gray] (0,0) -- (3,0); 
		\draw[gray] (0,.5) -- (3,.5); 
		\draw[gray] (0,1) -- (3,1); 
		\draw[gray] (0,0) -- (0,1); 
		\draw[gray] (.5,0) -- (.5,1);
		\draw[gray] (1,0) -- (1,1);
		\draw[gray,dotted] (1.1,0.25) -- (2.4,0.25);
		\draw[gray,dotted] (1.1,0.75) -- (2.4,0.75);
		\draw[gray] (2.5,0) -- (2.5,1);
		\draw[gray] (3,0) -- (3,1);
		\draw (.25,.25) node {$\bm{2n}$};
		\draw (.25,.75) node {\textbf{1}};
		\end{tikzpicture}
$$

(ii) Observe that if we start with the two clues that we gave in the $2 \times 2$ setting, then we can complete the rest of the board anyway we would like.  Filling in all of those entries will leave us with a board with $2n-2$ clues that has no unique solution.  For example, if we choose one possible (vertical) ``zigzag" completion then we would have the board:
$$
\begin{tikzpicture}
		\draw[gray] (0,0) -- (5,0); 
		\draw[gray] (0,.5) -- (5,.5); 
		\draw[gray] (0,1) -- (5,1); 
		\draw[gray] (0,0) -- (0,1); 
		\draw[gray] (.5,0) -- (.5,1);
		\draw[gray] (1,0) -- (1,1);
		\draw[gray] (1.5,0) -- (1.5,1);
		\draw[gray] (2,0) -- (2,1);
		\draw[gray,dotted] (2.1,0.25) -- (2.9,0.25);
		\draw[gray,dotted] (2.1,0.75) -- (2.9,0.75);
		\draw[gray] (3,0) -- (3,1); 
		\draw[gray] (4,0) -- (4,1);
		\draw[gray] (5,0) -- (5,1);
		\draw (.25,.25) node {\textbf{2}};
		\draw (.75,.75) node {\textbf{4}};
		\draw (1.25,0.75) node {\textbf{5}};
		\draw (1.75,0.75) node {\textbf{8}};
		\draw (1.25,0.25) node {\textbf{6}};
		\draw (1.75,0.25) node {\textbf{7}};
		\draw (3.5,0.75) node {\textbf{2n-2}};
		\draw (3.5,0.25) node {\textbf{2n-3}};
		\draw (4.5,0.75) node {\textbf{2n-1}};
		\draw (4.5,0.25) node {\textbf{2n}};
\end{tikzpicture}
$$
Note that while we have the clue $2n$ in the bottom row, it will actually end up in the top row if the number of columns to be filled, $n-2$, is even.  Certainly, having any board filled with $2n-1$ clues will define a Numbrix puzzle as there will be only one empty square and only one missing number to put in that position.  Hence, we may conclude that $2n-2$ clues is maximal.
\end{proof}

\section{The more general setting}
From here on, we will be looking only at $m \times n$ boards with $3 \leq m \leq n$.  The best place to start is with a generalization of Theorem~\ref{thm:2xn} part (ii).  In general, for any $m \times n$ board there exists a choice of $mn-2$ clues which will not define a Numbrix puzzle.  We demonstrate one such choice below:

\begin{thm}[Maximum Clues]\label{thm:maxclues}~\\
For an $m \times n$ board with $3 \leq m \leq n$, the maximum number of clues possible without defining a puzzle is $mn-2$.
\end{thm}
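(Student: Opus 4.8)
The plan is to prove the two inequalities separately. For the upper bound, note that by the paper's convention any set of $mn-1$ clues already extends to at least one full solution $S$; since exactly one cell is empty and exactly one value of $1,\dots,mn$ is unused, the only completion places that value in that cell, recovering $S$. Hence every set of $mn-1$ (or $mn$) clues defines a puzzle, so a non‑defining set has at most $mn-2$ clues. For the matching lower bound I must exhibit a set of $mn-2$ clues with more than one completion; equivalently, I will build two \emph{distinct} solutions $S_1$ and $S_2$ that agree on all but two cells. The $mn-2$ common entries are then a legitimate set of clues (they extend to $S_1$, so there are no ``false clues'') admitting both $S_1$ and $S_2$, hence not defining a puzzle.

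The device that produces such a near‑coincident pair is a flexible corner. I will arrange $S_1$ so that the top‑left $2\times 2$ block carries $1$ at $(1,1)$, $2$ at $(1,2)$, $3$ at $(2,2)$, $4$ at $(2,1)$, and so that $5$ sits at $(3,1)$; note $5$ is forced there, as $(3,1)$ is the only neighbor of $(2,1)$ not already used. Let $S_2$ be obtained from $S_1$ by swapping the $1$ and the $3$, that is, placing $3$ at $(1,1)$ and $1$ at $(2,2)$. The key point is that $(1,1)$ and $(2,2)$ are each adjacent to $(1,2)$ and to $(2,1)$: the endpoint $1$ only needs a neighbor holding $2$, and $3$ only needs neighbors holding $2$ and $4$, so either cell serves for either value. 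The only consecutive pairs involving a relocated value are $(1,2)$, $(2,3)$, $(3,4)$, and in $S_2$ these occupy $\{(2,2),(1,2)\}$, $\{(1,2),(1,1)\}$, $\{(1,1),(2,1)\}$ respectively, each an adjacent pair; every other consecutive pair sits in exactly the cells it occupied in $S_1$. Thus $S_2$ is a valid solution, genuinely different from $S_1$, differing only at $(1,1)$ and $(2,2)$, and deleting those two squares from $S_1$ leaves $mn-2$ clues with at least two completions.

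It remains to produce one valid solution $S_1$ with that prescribed corner for every $3\le m\le n$. Equivalently, I need a Hamiltonian path of the $m\times n$ grid graph beginning $(1,1),(1,2),(2,2),(2,1),(3,1)$, i.e.\ a Hamiltonian path of the board with the top‑left $2\times2$ block removed, starting at $(3,1)$. I route it explicitly: from $(3,1)$ go right to $(3,3)$, up through $(2,3)$ to $(1,3)$, right along row $1$ to $(1,n)$, down to $(2,n)$, left along row $2$ to $(2,4)$, down to $(3,4)$, right along the remainder of row $3$ to $(3,n)$, down to $(4,n)$, and then an ordinary row‑by‑row zig‑zag through rows $4,\dots,m$. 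One checks this visits each remaining cell exactly once. The apparently bad case $n=3$ with $m\ge 4$ does not arise since $m\le n$, and for $m=n=3$ the short path $(3,1),(3,2),(3,3),(2,3),(1,3)$ already completes the board. Combining the two bounds, the maximum number of clues not defining a puzzle is exactly $mn-2$.

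The only real obstacle is the bookkeeping in the last paragraph: checking that the explicit snake covers the L‑shaped leftover region exactly once for all admissible $m,n$, including the small boundary cases ($m=3$, or $n=4$, etc.). Nothing there is deep, but the indices require care; once a clean description of the routing is fixed, the rest of the argument is immediate.
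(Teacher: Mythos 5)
Your proof is correct and follows essentially the same strategy as the paper: the trivial observation that $mn-1$ clues force the unique remaining value into the unique empty cell, together with an explicit full solution whose values $1$ and $3$ occupy swappable diagonal cells of the top-left $2\times 2$ block, yielding $mn-2$ clues with two completions. The only difference is the particular Hamiltonian path used to extend the corner configuration to the whole board, which is immaterial.
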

\begin{proof}
We now generalize the $2 \times n$ case to $m \times n$ with $m \geq 3$.  First consider the top two rows.  If $n$ is odd, then the clues given in the proof of Theorem~\ref{thm:2xn} will have $2n$ will appear in the last entry of row two.  If instead $n$ is even, then we can reflect these clues across the boundary line so that again $2n$ will appear at the end of row two.  Either way, we are now left to fill the other $(m-2) \times n$ board in any way we would like (filling in all clues) starting from the top right corner.  To demonstrate one possible example, we could zig-zag across the rows.  

Since the $2 \times 2$ block in the upper left corner still cannot be determined, we have successfully given $mn-2$ clues without defining a puzzle.  As in the $2 \times n$ case, giving $mn-1$ clues definitely must define a puzzle as there is only one empty square and only one number left to place in it.  
\end{proof}

Determining the minimum number of clues necessary to define an $m \times n$ puzzle is a much more difficult task.  In fact, in the general setting we plan only to demonstrate a nice upper bound on the number.  After explaining the upper bound we will then comment directly on small cases in which we believe we can say more.

\begin{thm} For an $m \times n$ grid with $3 \leq m \leq n$, the minimum number of clues required to define a Numbrix puzzle is less than or equal to $\lceil\frac{m}{2}\rceil$.
\end{thm}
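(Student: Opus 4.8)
The plan is to exhibit, for every $3 \le m \le n$, one explicit solution together with a set of exactly $\lceil m/2 \rceil$ clue squares, and then to prove that these clues force that solution. I would take the horizontal zig-zag solution that places $1$ in the top-right corner and then sweeps leftward, so that its first column reads, from the top, $n,\ n+1,\ 3n,\ 3n+1,\ 5n,\ 5n+1,\dots$; in particular the first-column entries occur in consecutive-value pairs occupying rows $\{1,2\},\{3,4\},\{5,6\},\dots$. As clues I would take the first-column entry in row $2$ (the value $n+1$) together with the first-column entry in each \emph{odd} row $3,5,7,\dots$ that exists (the values $3n,5n,7n,\dots$). These number $1+\lfloor(m-1)/2\rfloor=\lceil m/2\rceil$, and when $m$ is odd the last of them is the value $mn$, which sits in the bottom-left corner.

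To establish uniqueness I would run the column-by-column cascade behind Lemma~\ref{lem:zig-zag}, now started from these sparser anchors. When $m$ is odd the bottom-left corner carries the maximal value $mn$, so its single free neighbour is forced, and from there the bottom rows fill in (when $m$ is even a short direct argument handles the bottom two rows). Each clued first-column entry $v$ has its consecutive partner $v\pm1$ in the square of column~$1$ directly above or below it, so it forces the value-neighbours $v-1$ and $v+1$ into a definite pair of squares, but leaves a binary ambiguity about which of the two orderings occurs. The crux is that picking the ``wrong'' ordering at a clued entry shifts the values of an entire pair of rows, and propagating the now-forced cascade out to column~$n$ then makes two squares receive the same value (equivalently, strands some value with no available square). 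Hence only the orderings coming from the intended zig-zag survive; the cascade then determines every square, and the result is exactly that zig-zag.

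I expect the real work to be in making this last step precise: one must show, uniformly in $m$ and $n$, that every non-intended combination of orderings collides somewhere, and the bookkeeping differs slightly according to the parity of $m$ and according to whether a clued square lies at a corner or on an edge. One clean way to organize it is by induction on $m$: check $m=3$ and $m=4$ by hand --- there the collision is simply that the top row would be forced to repeat a value already used lower down, or a corner value would have no successor --- and for the step from $m$ to $m+2$ prepend two rows and one new clue, letting the clue inherited near the old corner tell the extended cascade where to re-enter the old board.
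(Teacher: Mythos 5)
Your overall skeleton matches the paper's: target the horizontal zig-zag solution that starts at the top-right, place all $\lceil m/2\rceil$ clues in the first column, argue that the clues force the entire first column, and finish with Lemma~\ref{lem:zig-zag}. For $m\le 6$ your clue set even coincides with the paper's. But the heart of the proof is the forcing step, and there your argument has a genuine gap. You assert that each clued first-column entry $v$ leaves only a ``binary ambiguity'' about where $v-1$ and $v+1$ go, but for your isolated clues in odd rows $5,7,\dots$ (e.g.\ $5n$ at row $5$ when $m\ge 7$) the three grid-neighbours of the clued square --- the squares above, below, and to the right --- are all unclued and all the same colour, so $v-1$ and $v+1$ can a priori occupy any two of the three in either order: six local configurations, including the one where both land in column~$1$. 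Nothing local rules these out, and your global mechanism for eliminating them --- ``propagating the cascade to column $n$ produces a collision'' --- is exactly the hard part; you acknowledge this (``I expect the real work to be in making this last step precise'') and only gesture at an induction on $m$ without carrying it out. As it stands the proof is not complete, and it is not even clear that your clue set for $m\ge 7$ admits a forcing argument of this local type at all.

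The paper avoids this difficulty by choosing the clues differently: in each block of four rows it places \emph{two adjacent} clues, in column~$1$ of rows $4i-2$ and $4i-1$, carrying values $s_i+(n+1)$ and $s_i+3n$ that differ by $2n-1$. Because the two clued cells are adjacent while their values are far apart, the solution path joining them, together with the edge between the two cells, forms a cycle; a short separation argument then shows the path cannot leave the pair on the wrong side, which pins down the first-column entries immediately above and below the pair with no case analysis and no propagation to column $n$. The leftover $r\in\{1,2,3\}$ rows are handled by the same adjacent-pair mechanism (or, for $r=1$, by placing $mn$ itself in the corner). If you want to rescue your version, the cleanest fix is to adopt the paper's paired clue placement; otherwise you must actually prove the collision claim for your isolated clues, which is a substantial piece of missing work.
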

\begin{proof}
Our main argument starts when $m$ is at least 4, so we consider the case when $m=3$ separately.  Consider the following $3 \times n$ board with two clues:
$$
\begin{tikzpicture}
		\draw[gray] (-1,0) -- (3,0); 
		\draw[gray] (-1,.5) -- (3,.5); 
		\draw[gray] (-1,1) -- (3,1); 
		\draw[gray] (-1,-.5) -- (3,-.5);
		\draw[gray] (-1,-.5) -- (-1,1); 
		\draw[gray] (.5,-.5) -- (.5,1);
		\draw[gray] (1,-.5) -- (1,1);
		\draw[gray,dotted] (1.1,0.75) -- (2.4,0.75);
		\draw[gray,dotted] (1.1,0.25) -- (2.4,0.25);
		\draw[gray,dotted] (1.1,-0.25) -- (2.4,-0.25);
		\draw[gray] (2.5,-.5) -- (2.5,1);
		\draw[gray] (3,-.5) -- (3,1);
		\draw (-.25,.25) node {$\bm{n+1}$};
		\draw (-.25,-.25) node {$\bm{3n}$};
\end{tikzpicture}
$$
The important observation to make here is that $n+2$ cannot appear above $n+1$ as the path from $n+1$ to $3n$ would then create a loop that makes it impossible to complete the puzzle.  It follows that $n$ must appear above $n+1$, giving us the entire first column of a $3 \times n$ zig-zag solution.  Now, we may apply Lemma~\ref{lem:zig-zag}.

Next, we turn our attention to the $m\geq 4$ situation.  Using the Division Algorithm we may write $m=4q+r$ for some integer $q\geq 1$ and some $r=0$, 1, 2, or 3.  Our proscribed clues come most naturally in $4 \times n$ blocks with two clues in each block.  For any $1 \leq i \leq q$ we'll call the $i$th block the one contained in rows $4i-3$, $4i-2$, $4i-1$, and $4i$.  The entries in each block are the same up to a shift by the number of squares in all of the blocks above it.  For simplicity we'll denote the shift for the $i$th block, $4n(i-1)$, by $s_i$, see Figure~\ref{fig:4xnBlock}.

\begin{figure} [h!]
		\centering
\begin{tikzpicture}[scale=.5]
		\draw[gray] (0,0) rectangle (10,1);
		\draw[gray] (0,2) rectangle (10,3);
		\draw[gray] (0,3) rectangle (10,4);
		\draw[gray] (0,0) rectangle (4,4);
		\draw[gray] (9,0) rectangle (10,4);
		\draw[gray,dotted] (4.1,0.5) -- (8.9,0.5);
		\draw[gray,dotted] (4.1,1.5) -- (8.9,1.5);
		\draw[gray,dotted] (4.1,2.5) -- (8.9,2.5);
		\draw[gray,dotted] (4.1,3.5) -- (8.9,3.5);
		\draw (2,1.5) node {\small $\bm{s_i + 3n}$};
		\draw (2,2.5) node {\small $\bm{s_i + (n+1)}$};
		\draw (12,.5) node {\small Row $4i$};
		\draw (12,1.5) node {\small Row $4i-1$};
		\draw (12,2.5) node {\small Row $4i-2$};
		\draw (12,3.5) node {\small Row $4i-3$};
\end{tikzpicture}
\caption{The $i$th $4 \times n$ block in an $m \times n$ Numbrix puzzle.}
\label{fig:4xnBlock}	
\end{figure}

Just as in the $3 \times n$ situation, our clues imply that $s_i + (n+2)$ cannot be placed above $s_i+(n+1)$ and, similarly, $s_i+(3n-1)$ cannot be placed below $s_i+3n$ as, in either case, the path connecting $s_i + (n+1)$ to $s_i+3n$ would then make it impossible to complete the puzzle.  Hence, it follows that $s_i + n$ must appear above $s_i+(n+1)$ and $s_i+(3n+1)$ must appear below $s_i+3n$.  Thus, the clues in each $4 \times n$ block serve to immediately imply entries in the entire first column of the block.  

Now we describe how to place clues in the final $r$ rows when $r=1$, 2, or 3.  Remember that to remain under our bound of $\lceil\frac{m}{2}\rceil$ we must use no more than $\lceil \frac{r}{2} \rceil$ clues. Our goal in each case is to pick clues that will force the rest of the first column to be completely filled.  If $r=1$ or 2, we will place one more clue $s_q+5n$ in the first column of row $4q+1$.  When $r=2$, the path connecting $s_q+(3n+1)$ (the last implied entry in the $q$th block) to $s_q+5n$ again implies that the entry $s_q + (5n+1)$ must appear directly below $s_q +5n$.  Finally, if $r=3$ we place $s_{q+1}+(n+1)$ and $s_{q+1}+3n$ in the first column of the second to last and last rows respectively.  The path that must connect these two clues implies that $s_{q+1}+n$ must appear above $s_{q+1}+(n+1)$.  In all cases our clues have determined all of the entries in the first column of the board.  Moreover, by design, these entries are exactly the first column of an $m \times n$ zig-zag solution which starts in the top right corner, hence, by Lemma~\ref{lem:zig-zag} we know that the only solution is that zig-zag solution.  
\end{proof}

Observe that for the $4 \times n$ boards, we can conclude that our upper bound ($\lceil \frac{4}{2} \rceil = 2$) must actually be minimal as there exists a circular path for each $n$ making it impossible for one clue to suffice by Corollary~\ref{cor:oneclue}.

\begin{corr}[Minimum for $4 \times n$]\label{cor:4xn}
The minimum number of clues that will define a $4 \times n$ Numbrix puzzle is 2.
\end{corr}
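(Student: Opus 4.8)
The plan is to trap the minimum between two matching bounds, both of which are already on the table. For the upper bound I would simply cite the preceding theorem: with $3 \le m \le n$ the minimum number of clues needed to define a puzzle is at most $\lceil m/2\rceil$, and for $m = 4$ this equals $\lceil 4/2\rceil = 2$. (The construction in that proof even hands us an explicit witness: place the two clues $n+1$ and $3n$ in the first column of the single $4\times n$ block, which forces the whole first column of a zig-zag solution, so Lemma~\ref{lem:zig-zag} finishes.)

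For the matching lower bound I would invoke Corollary~\ref{cor:oneclue}. Since $m = 4$ is even, Lemma~\ref{lem:circularpaths} guarantees a circular path in the $4 \times n$ grid, and then Corollary~\ref{cor:oneclue} says that no single clue can determine a puzzle: whatever value $x$ is placed in whatever square, orienting the circular path in each of its two directions produces two distinct solutions containing $x$ in that position. Hence at least $2$ clues are needed (and, for that matter, zero clues do not suffice either). Combining this with the upper bound gives exactly $2$.

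I do not anticipate any genuine obstacle here — the work was done in the two cited results, and this corollary is essentially a remark assembling them. The only things worth a moment's care are bookkeeping: checking that the earlier theorem genuinely applies to $4 \times n$ (we need $n \ge m = 4$, which is our standing convention $m \le n$ together with $m = 4$), and that "defines a puzzle" is used consistently to mean "admits a unique completion," so that the upper and lower bounds are about the same quantity.
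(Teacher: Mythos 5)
Your proposal is correct and matches the paper's reasoning exactly: the paper also obtains the upper bound $\lceil 4/2\rceil = 2$ from the preceding theorem and the lower bound from the existence of a circular path (since $m=4$ is even) via Corollary~\ref{cor:oneclue}. The extra bookkeeping you mention is fine but not needed beyond what the paper already assumes.
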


\subsection{$3 \times n$ boards}
For $3 \times n$ boards, our upper bound strongly suggests that the minimum number of clues is again two.  The reader might be wondering why it doesn't immediately prove that two clues is minimal, seeing as one clue was insufficient for $2 \times n$ boards.  Certainly it seems obvious that one clue should be insufficient for $3 \times n$ boards (and anything larger) as well.  The problem here is two-fold: First, in the $2 \times n$ setting, there was a circular path to rely on, but no such path will exist in a $3 \times n$ board with $n$ odd.  And more generally, without a nice way to induct from smaller boards to larger ones (since there is no guarantee that the solution in an $m \times n$ puzzle will contain within it a solution to a smaller puzzle), we cannot even definitively state that the minimum number of clues must weakly increase as boards get larger (although we believe that it must).  In the theorem that follows, we will give a constructive proof that one clue is insufficient after using symmetry to make some reductions.

\begin{thm}[Minimum for $3 \times n$]\label{thm:3xn}~\\
The minimum number of clues that will define a $3 \times n$ Numbrix puzzle is 2.
\end{thm}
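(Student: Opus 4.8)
\textit{Reduction.} The plan is to invoke the preceding theorem, which produces an explicit $3\times n$ puzzle with $\lceil 3/2\rceil = 2$ clues; this leaves only the lower bound, namely that no single clue can define a $3\times n$ puzzle. Since a number $v$ placed in a cell $p$ counts as a clue only when at least one completed puzzle realizes it, the precise statement to prove is: \emph{whenever some $3\times n$ solution places the value $v$ in the cell $p$, there are at least two such solutions.} (Note that Corollary~\ref{cor:oneclue} does \emph{not} already give this when $n$ is odd, since then Lemma~\ref{lem:circularpaths} furnishes no circular path to exploit, which is exactly why a constructive argument is needed.)

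\textit{Symmetry reductions.} Before building anything, I would shrink the case list using the symmetries of the board together with the Reversal Property. Reflecting the $3\times n$ rectangle across its horizontal midline (swapping rows $1$ and $3$) and across its vertical midline (reversing the column order) both send solutions to solutions, so we may assume $p$ lies in row $1$ or row $2$ and in one of the first $\lceil n/2\rceil$ columns. Independently, the Reversal Property lets us exchange $v$ with $3n+1-v$, so we may also assume $v\le\lceil 3n/2\rceil$. Up to these reductions only four kinds of cell survive — a corner, a non-corner cell of the top row, the middle cell of the first column, and a fully interior cell — each still to be handled for every value $v$ in the lower half.

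\textit{Construction.} For each surviving $(p,v)$ I would write down two distinct Hamiltonian paths of the grid (read as completed puzzles with an appropriate choice of direction) that place $v$ in $p$, drawn from an explicit family of zig-zag solutions and their hybrids: run the path as a column-by-column vertical zig-zag through the first $j$ columns and then as a row-by-row horizontal zig-zag through the last $n-j$ columns, and vary the split column $j$, the two orientations, and the starting corner. The column-by-column forcing argument behind Lemma~\ref{lem:zig-zag} shows each such configuration is a genuine solution, and the family is rich enough that a clue threads at least two of its members. The conceptual reason this must work: a solution with $v$ at $p$ is merely a partition of the cells into a $v$-element set $A\ni p$ and its complement $B$, plus a Hamiltonian path of the induced subgraph on $A$ ending at $p$ and one on $B$ starting at $p$; for a single clue to pin down the whole board, the partition \emph{and} both sub-paths would all have to be unique, which cannot happen in a grid this flexible once $n\ge 2$ (for $n=2$ one verifies the short list of cases directly).

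\textit{Main obstacle.} The genuine work — and the step I expect to be hardest — is uniformity: the two solutions must be exhibited once and for all, simultaneously over every $n$, every admissible column of $p$ (up to roughly $n/2$), and every admissible value (up to roughly $3n/2$). The tight cases are those in which $p$ sits near the middle column and $v$ near the middle value, since there the zig-zag family offers the fewest candidates passing through $(p,v)$ and any local rerouting has the least slack; I would expect these to need either a small supplementary list of ad hoc alternative paths or a more deliberate second path obtained by cutting the board into a left block and a right block of nearly equal width and rerouting only inside whichever block lies ``downstream'' of the clue. The checkerboard/parity constraint when $n$ is odd is a bookkeeping point to respect but not an obstacle: it only restricts which pairs $(p,v)$ occur and never blocks the production of a second solution.
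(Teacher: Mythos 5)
Your outline matches the paper's in all its structural moves: take the two-clue upper bound from the preceding theorem, dispose of even $n$ via Corollary~\ref{cor:oneclue}, cut down the surviving $(p,v)$ pairs using the two reflections of the board, the reversal property, and the checkerboard parity constraint, and then exhibit two solutions through each remaining pair. (One simplification you miss: a cell in row $2$, or in the middle column when $n$ is odd, is \emph{fixed} by a reflection of the board, so any solution through it reflects to a second, necessarily different, solution through it; the paper uses this to reduce at once to row $1$ and columns $j<\frac{n+1}{2}$, and to dispatch $n=3$ entirely. Under your weaker reduction you would have to cover those cells constructively as well.)

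The genuine gap is that the construction --- which is the entire content of the lower bound for odd $n$ --- is never carried out. The assertion that the family of vertical-then-horizontal hybrid zig-zags ``is rich enough that a clue threads at least two of its members'' is precisely the statement requiring proof, and you concede as much when you say the tight cases (clue near the middle column, value near $\frac{3n+1}{2}$) would need ``a small supplementary list of ad hoc alternative paths'' that you do not supply; the counting heuristic about partitions and sub-paths being non-unique is not an argument. For comparison, the paper's construction is quite specific: it fixes $J$, the largest odd number below $\frac{n+1}{2}$, lays down the initial segment $1,2,\dots,x$ so that $x$ lands in position $J$ (pulling the start leftward along row 1 when $x<J$, then back through vertical zig-zags in the leading columns when $x>J$), runs the front of the path to the right end of row 1, and shows the residual $3\times(J-x)$ or $2\times l$ block admits two completions; it must then verify by an explicit inequality ($2n-3\ge\frac{3n\pm 1}{2}$, i.e.\ $n\ge 5$ or $n\ge 7$) that these constructions reach every admissible value up to $\frac{3n+1}{2}$ in every position $1$ through $J$, with a separate remark for position $J+1$. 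Until you produce that level of detail --- two concrete solutions for each reduced $(p,v)$, together with a check that no admissible pair falls outside the range of your constructions --- the lower bound is not established.
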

\begin{proof} Based on our above work and the fact that Corollary~\ref{cor:oneclue} will apply whenever $n$ is even, it suffices to show that one clue is not enough for any $3 \times n$ board with $n$ odd.  Let $x$ denote the numerical part of the clue given.  In the case when $n=3$, observe that no matter where $x$ appears, it will be placed on an axis of reflective symmetry (either horizontal, vertical, or a diagonal).  Hence, by reflecting any solution containing that clue we obtain a different solution that still contains it.

In a similar way, when $n \geq 5$, if $x$ appears in the second row or in column $\frac{n+1}{2}$, then by reflecting one solution we will obtain another which contains the same clue.  Therefore, up to symmetry, we need only consider the situation in which $x$ appears in the first row and in column $j$ for some $1 \leq j < \frac{n+1}{2}$.  For simplicity, we will refer to these $\frac{n-1}{2}$ squares as \emph{positions} 1 through $\frac{n-1}{2}$.  In addition, using the reversal property, we only need to consider cases with $1 \leq x \leq \frac{3n+1}{2}$.  Finally, recall that our graph coloring implies that the parity of each entry must match the parity of its position.  We are now ready to construct two solutions for each possible clue given these restrictions.

Let $J$ denote the largest odd number such that $J < \frac{n+1}{2}$.  Certainly if $x=1$ and is placed in position $J$, then we may draw the path to the right end of row one and then zig-zag vertically along the columns back to column $J$.  Since $J$ is odd, this zig-zagging will end in the third row and hence, from here, we have a $3 \times (J-1)$ board to fill starting in the bottom right corner.  We can finish by zig-zagging along the rows or along the columns, hence there are at least two solutions, see Figure~\ref{fig:x=1soln}.

	\begin{figure}[h!]
		\centering
		\begin{tikzpicture}[scale = .8]
		\draw[gray] (0,0) -- (3.5,0);
		\draw[gray] (0,.5) -- (3.5,.5);
		\draw[gray] (0,1) -- (3.5,1);
		\draw[gray] (0,1.5) -- (3.5,1.5);
		\draw[gray] (0,0) -- (0,1.5);
		\draw[gray] (.5,0) -- (.5,1.5);
		\draw[gray] (1,0) -- (1,1.5);
		\draw[gray] (1.5,0) -- (1.5,1.5);
		\draw[gray] (2,0) -- (2,1.5);
		\draw[gray] (2.5,0) -- (2.5,1.5);
		\draw[gray] (3,0) -- (3,1.5);
		\draw[gray] (3.5,0) -- (3.5,1.5);
		\draw[fill=black] (1.25,1.25) circle [radius=0.1];
		\draw[very thick,->] (1.25,1.25) -- (2.25,1.25);
		\draw[very thick] (1.25,1.25) -- (3.25,1.25) -- (3.25,0.25) -- (2.75,0.25) -- (2.75, 0.75) -- (2.25,0.75) -- (2.25,0.25) -- (1.75,0.25) -- (1.75,0.75) -- (1.25,0.75) -- (1.25,0.25);
		\draw[very thick] (1.25,0.25) -- (0.25,0.25) -- (0.25,0.75) -- (0.75,0.75) -- (0.75,1.25) -- (0.25,1.25);
		\end{tikzpicture}
		\qquad
		\begin{tikzpicture}[scale = .8]
		\draw[gray] (0,0) -- (3.5,0);
		\draw[gray] (0,.5) -- (3.5,.5);
		\draw[gray] (0,1) -- (3.5,1);
		\draw[gray] (0,1.5) -- (3.5,1.5);
		\draw[gray] (0,0) -- (0,1.5);
		\draw[gray] (.5,0) -- (.5,1.5);
		\draw[gray] (1,0) -- (1,1.5);
		\draw[gray] (1.5,0) -- (1.5,1.5);
		\draw[gray] (2,0) -- (2,1.5);
		\draw[gray] (2.5,0) -- (2.5,1.5);
		\draw[gray] (3,0) -- (3,1.5);
		\draw[gray] (3.5,0) -- (3.5,1.5);
		\draw[fill=black] (1.25,1.25) circle [radius=0.1];
		\draw[very thick,->] (1.25,1.25) -- (2.25,1.25);
		\draw[very thick] (1.25,1.25) -- (3.25,1.25) -- (3.25,0.25) -- (2.75,0.25) -- (2.75, 0.75) -- (2.25,0.75) -- (2.25,0.25) -- (1.75,0.25) -- (1.75,0.75) -- (1.25,0.75) -- (1.25,0.25);
		\draw[very thick] (1.25,0.25) -- (0.75,0.25) -- (0.75,1.25) -- (0.25,1.25) -- (0.25,0.25);
		\end{tikzpicture}
		\caption{Two possible paths starting in position $J=3$ on a $3 \times 7$ board.}
		\label{fig:x=1soln}
	\end{figure}
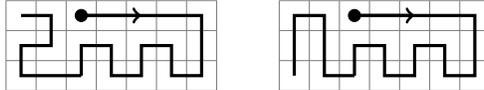

 If $3 \leq x < J$ and odd we can play the same game by pulling the start of our path back to the left, i.e.\ placing 1, 2, 3, etc.\ consecutively along the top row (starting in position $J-x+1$) so that $x$ appears in position $J$. Here we can mimic the construction from above as well, doing our vertical zig-zag to column $J-x+1$ (which will again stop in the third row since $J-x+1$ is odd) and then having at least two ways to complete the $3 \times (J-x)$ board that remains.
 
 For $x \geq J$ and odd, we will pull the start of the path back to position 1 and then (for $x > J$) will continue to pull back by zig-zagging vertically in the columns.  Observe that each column used increases $x$ by 2, thereby continuing to hit consecutive odd numbers in position $J$.  In each of these cases, we will take the front end of the path to the right end of the first row as before, but after this we will be left with an $2 \times l$ board to fill where $l=n-\frac{x-J}{2}$, since $\frac{x-J}{2}$ is the number of columns needed in the path to put $x$ in position $J$.  As long as $l \geq 2$ there will be at least two ways to complete the board (zig-zagging along the rows or columns), see Figure~\ref{fig:x>Jsoln}.
 
 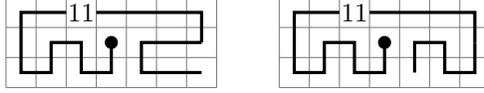
\begin{figure}[h!]
		\centering
		\begin{tikzpicture}[scale = .8]
		\draw[gray] (0,0) -- (3.5,0);
		\draw[gray] (0,.5) -- (3.5,.5);
		\draw[gray] (0,1) -- (3.5,1);
		\draw[gray] (0,1.5) -- (3.5,1.5);
		\draw[gray] (0,0) -- (0,1.5);
		\draw[gray] (.5,0) -- (.5,1.5);
		\draw[gray] (1,0) -- (1,1.5);
		\draw[gray] (1.5,0) -- (1.5,1.5);
		\draw[gray] (2,0) -- (2,1.5);
		\draw[gray] (2.5,0) -- (2.5,1.5);
		\draw[gray] (3,0) -- (3,1.5);
		\draw[gray] (3.5,0) -- (3.5,1.5);
		\draw[fill=black] (1.75,0.75) circle [radius=0.1];
		\draw (1.25,1.25) node {11};
		\draw[very thick] (1.75,0.75) -- (1.75,0.25) -- (1.25,0.25) -- (1.25,0.75) -- (0.75,0.75) -- (0.75,0.25) -- (0.25,0.25) -- (0.25,1.25) -- (1,1.25);
		\draw[very thick] (1.5,1.25) -- (3.25,1.25) -- (3.25,0.75);
		\draw[very thick] (3.25,0.75) -- (2.25,0.75) -- (2.25,0.25) -- (3.25,0.25);
		\end{tikzpicture}
		\qquad
		\begin{tikzpicture}[scale = .8]
		\draw[gray] (0,0) -- (3.5,0);
		\draw[gray] (0,.5) -- (3.5,.5);
		\draw[gray] (0,1) -- (3.5,1);
		\draw[gray] (0,1.5) -- (3.5,1.5);
		\draw[gray] (0,0) -- (0,1.5);
		\draw[gray] (.5,0) -- (.5,1.5);
		\draw[gray] (1,0) -- (1,1.5);
		\draw[gray] (1.5,0) -- (1.5,1.5);
		\draw[gray] (2,0) -- (2,1.5);
		\draw[gray] (2.5,0) -- (2.5,1.5);
		\draw[gray] (3,0) -- (3,1.5);
		\draw[gray] (3.5,0) -- (3.5,1.5);
		\draw[fill=black] (1.75,0.75) circle [radius=0.1];
		\draw (1.25,1.25) node {11};
		\draw[very thick] (1.75,0.75) -- (1.75,0.25) -- (1.25,0.25) -- (1.25,0.75) -- (0.75,0.75) -- (0.75,0.25) -- (0.25,0.25) -- (0.25,1.25) -- (1,1.25);
		\draw[very thick] (1.5,1.25) -- (3.25,1.25) -- (3.25,0.75);
		\draw[very thick] (3.25,0.75) -- (3.25,0.25) -- (2.75,0.25) -- (2.75,0.75) -- (2.25,0.75) -- (2.25,0.25);
		\end{tikzpicture}
		\caption{Two possible paths with $x=11$ in position $J=3$ on a $3 \times 7$ board.}
		\label{fig:x>Jsoln}
	\end{figure}

We have now demonstrated two different solutions with $x \geq 1$ and odd in position $J$ as long as $n-\frac{x-J}{2} \geq 2$.  The largest value of $x$ occurs when we have equality, in which case $2n-4 + J = x$.  Since these constructions pull back through positions 1 through $J-1$, it follows that we can use these same constructions if $x-j$ is placed in position $J-j$ instead, for any $1 \leq j \leq J-1$.  Moreover, since position $J$ is allowed to have odd clues ranging from 1 to $2n-4+J$ it follows that position $J-j$ is allowed to have clues (of appropriate parity) ranging from 1 to $2n-4+J-j$.

The lowest of these bounds is in position 1, where clues range as high as $2n-3$.  Recall that we need this range to reach at least the largest odd number less than or equal to $\frac{3n+1}{2}$.  Observe that if $\frac{3n+1}{2}$ is even (as when $n=5$), then this means we want $2n-3 \geq \frac{3n-1}{2}$ which is true for all $n \geq 5$.  If instead $\frac{3n+1}{2}$ is odd (as when $n=7$), then we need $2n-3 \geq \frac{3n+1}{2}$ which is true for all $n \geq 7$.  Hence, our constructions will successfully cover all cases of clues of appropriate parity up to $\frac{3n+1}{2}$, in any position 1 through $J$, for any $n \geq 5$ as desired.

In principal, this leaves only clues in position $J+1$ if $\frac{n-1}{2}$ happens to be even.  Observe, however, that every one of our solutions with $x$ in position $J$ will also have $x+1$ in position $J+1$.  Hence, we certainly cover all necessary clues in position $J+1$ in this even case as well.  This completes the proof that no single clue can define a $3 \times n$ puzzle for any $n \geq 3$.
\end{proof}

\subsection{Conjectures for larger boards}
As seen in Theorem~\ref{thm:3xn}, the difficulty in proving that our upper bound is minimal arises from the need to rule out the possibility that any arrangement of fewer clues could define a puzzle.  Once again, the lack of a nice way to induct from boards of one size to larger ones is troublesome.  It seems only natural that the minimum number of clues should weakly increase as $m$ increases, but we have failed to discover a proof of this claim.  Despite this, we attempted to gain information about $5 \times n$ and $6 \times n$ puzzles by writing a computer program to create and analyze all solutions in the square cases, $5 \times 5$ and $6 \times 6$.  Our computer program has verified by brute force that two clues is insufficient for either board.  This gives us strong evidence that our stated upper bound on the minimum number of clues in these cases is actually minimal.

For the $7 \times 7$ situation, there are over 27 million different possible solutions.  We have yet to generate and analyze all of those cases, but in particular subsets (for example, solutions with 1 in the middle or on a diagonal just outside the middle), we have yet to find any arrangements of three clues (with 1 as a clue) which define a puzzle.  With these observations in mind, we offer the following conjectures:

\begin{conj}
(i) The minimum number of clues for $5 \times n$ is 3.\\
(ii) The minimum number of clues for $6 \times n$ is 3.\\
(iii) The minimum number of clues for $7 \times n$ is 4.\\
\end{conj}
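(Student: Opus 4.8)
The plan is to pair the upper bound $\lceil m/2\rceil$ proved above---which equals $3$, $3$, and $4$ for $m=5$, $6$, and $7$---with matching lower bounds. Because adjoining clues to a consistent clue set can never create new solutions, ``some set of $k$ clues defines a puzzle'' is a monotone property of $k$, so each of the three equalities reduces to one negative assertion: no $2$ clues define a $5\times n$ or a $6\times n$ puzzle, and no $3$ clues define a $7\times n$ puzzle. In particular this absorbs the one-clue cases, covered for even widths by Corollary~\ref{cor:oneclue} and otherwise by an argument in the style of Theorem~\ref{thm:3xn}.

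For each negative assertion I would imitate the structure of the proof of Theorem~\ref{thm:3xn}: first use the dihedral symmetries of the rectangle together with the reversal property to reduce an arbitrary placement of the $k$ clues to a bounded list of types---recorded by the positions of the clues relative to the four corners and by the parities and rough sizes, as functions of $n$, of their values---and then, for each type, exhibit two distinct solutions carrying those clues. The second step would rest on two construction primitives already used in the paper. First: a clue-free axis-parallel sub-rectangle with both sides of length at least $2$ that the Hamiltonian path enters and exits at a single corner can be filled by either a row-wise or a column-wise zig-zag, producing two solutions---this is exactly the flexibility that closes the proof of Theorem~\ref{thm:3xn}. Second: a $2\times2$ clue-free block traversed by the path as two parallel segments admits the $2$-opt swap of its two middle edges whenever the rerouting keeps the walk connected---the local move behind the undetermined $2\times2$ corner in the proofs of Theorem~\ref{thm:maxclues} and Corollary~\ref{cor:4xn}.

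Operationally I would split into two stages. Stage one: settle the small boards---$5\times5$, $5\times6$, $5\times7$; $6\times6$, $6\times7$; and $7\times7,\dots,7\times 13$---by the brute-force enumeration already described for the square cases. Stage two: for $n$ past these thresholds, argue by pigeonhole---at most $3$ clues meet at most $3$ columns, so once $n$ is large enough there is a band of at least two consecutive clue-free columns---and build a Hamiltonian path whose behavior on that band is the corner zig-zag of a clue-free sub-rectangle (or contains an eligible $2\times2$ swap), then flip it to obtain a second solution consistent with the same clues.

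The main obstacle is precisely the one the paper identifies for larger boards: there is no clean induction from $m\times n$ to $m\times(n+1)$, so the construction must be made uniform in $n$, and since a fixed clue position carries a value that scales with $n$, the ``type'' of a clue depends on $n$ modulo small integers and these residues must be carried through the entire case analysis---which is what makes even the $5\times n$ two-clue case markedly more intricate than Theorem~\ref{thm:3xn}. A second, more mundane obstacle is the size of the base cases: $7\times7$ already has over $27$ million solutions, so the stage-two threshold has to be pushed low enough to be machine-checkable. The most satisfying outcome would be a band argument valid for all $n\ge m$, leaving only $5\times5$, $6\times6$, and $7\times7$ to be verified by computer; I would try to arrange that first, since it would also collapse most of the case analysis.
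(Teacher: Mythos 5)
First, a point of calibration: the statement you set out to prove is presented in the paper as a \emph{conjecture}, not a theorem. The authors prove only the upper bound $\lceil m/2\rceil$ (which gives $3$, $3$, $4$ for $m=5,6,7$) and support the matching lower bounds by brute-force enumeration of the $5\times5$ and $6\times6$ boards and a partial enumeration of $7\times7$; they explicitly note that they lack a proof, even of the weaker claim that the minimum weakly increases with $m$. So there is no argument in the paper for your plan to be measured against. Your framing is nonetheless the right one: the upper bound comes from the theorem already proved, the monotonicity observation (augmenting a defining clue set by entries of its unique solution) correctly reduces each equality to a single negative assertion about $k-1$ clues, and the even-width one-clue cases are indeed absorbed by Corollary~\ref{cor:oneclue}.

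That said, your proposal is a plan rather than a proof, and two of its load-bearing steps have genuine gaps. The first is the reduction ``to a bounded list of types'': with two interacting clues on a $5\times n$ board the relevant data is a pair of positions together with a pair of values that scale with $n$, and the dihedral symmetries plus reversal cut this down only by a constant factor; the paper's single-clue analysis on $3\times n$ (Theorem~\ref{thm:3xn}) is already delicate, and nothing in your sketch indicates how the interaction between two clue values is to be tamed uniformly in $n$. The second, more concrete, problem is the $2$-opt primitive: exchanging the two middle edges of a $2\times2$ block traversed as two parallel segments, even when the walk stays connected, necessarily reverses the orientation of the entire sub-path between the two swapped edges, so every cell on that sub-path changes its value. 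A clue-free $2\times2$ block, or a clue-free band of columns located by pigeonhole, is therefore not sufficient; you must also ensure that no clue lies on the reversed segment (or that its value is invariant under the reversal), which is precisely where the dependence on the clue values re-enters. The corner-entered sub-rectangle flip (row-wise versus column-wise zig-zag) does avoid this, since it permutes only the values inside the rectangle; but then the burden shifts to constructing, for every admissible pair or triple of clues, a Hamiltonian path consistent with those clues that contains such a clue-free sub-rectangle with both sides at least $2$ --- and that construction, uniform in $n$, is the entire content of the missing proof.
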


More generally, the fact that our stated upper bound appears to be minimal up through $m=7$ seems to suggest that perhaps it could be minimal for all $m \geq 3$.

\paragraph*{Acknowledgment} 
The authors would like to thank Jonathan Needleman for his suggestion of an approach to constructing $3 \times n$ puzzles with a particular single clue.

\end{document}